\newtheorem{thm}{Theorem}[section]
\newtheorem{lem}[thm]{Lemma}
\newtheorem{prop}[thm]{Proposition}
\theoremstyle{definition}
\theoremstyle{remark}
\numberwithin{equation}{section}
\begin{document}
\title[A note on standard equivalences]
{A note on standard equivalences}

\author[Xiao-Wu Chen] {Xiao-Wu Chen}

\subjclass[2010]{13D09, 16G10, 18E30}
\date{\today}

\thanks{E-mail:
xwchen$\symbol{64}$mail.ustc.edu.cn}
\keywords{tilting complex, standard equivalence, Orlov category,  homotopy category, triangle functor}%

\maketitle

\dedicatory{}%
\commby{}%

\begin{abstract}
We prove that any derived equivalence between triangular algebras is standard, that is, it is isomorphic to the derived tensor functor given by a two-sided tilting complex.
\end{abstract}

\section{Introduction}

Let $k$ be a field. We will require that all categories and functors are $k$-linear. Let  $A$ be a finite dimensional $k$-algebra. We denote by $A\mbox{-mod}$ the category of finite dimensional left $A$-modules and by $\mathbf{D}^b(A\mbox{-mod})$ its bounded derived category.

Let $B$ be another finite dimensional $k$-algebra. We will require that $k$ acts centrally on any $B$-$A$-bimodule. Recall that a \emph{two-sided tilting complex} is a bounded complex $X$ of $B$-$A$-bimodules such that the derived tensor functor gives an equivalence $X\otimes_A^\mathbf{L}-\colon \mathbf{D}^b(A\mbox{-mod})\rightarrow \mathbf{D}^b(B\mbox{-mod})$.

A triangle equivalence $F\colon \mathbf{D}^b(A\mbox{-mod})\rightarrow \mathbf{D}^b(B\mbox{-mod})$ is said to be \emph{standard} if it is isomorphic, as a triangle functor, to $X\otimes_A^\mathbf{L}-$ for some two-sided tilting complex $X$. It is an open question whether all triangle equivalences are standard. The aim of this note is to answer this question affirmatively in a special case.

Recall that the algebra $A$ is \emph{triangular} provided that the Ext-quiver of $A$ has no oriented cycles.  There are explicit examples of algebras $A$ and $B$, which are derived equivalent such that $A$ is triangular, but $B$ is not; consult the top of  \cite[p.21]{BGS}. It makes sense to have the following notion: an algebra $A$ is \emph{derived-triangular} if it is derived equivalent to a triangular algebra.

\begin{thm} \label{thm:main}
Let $A$ be a derived-triangular algebra. Then any triangle equivalence $ F\colon \mathbf{D}^b(A\mbox{-{\rm mod}})\rightarrow \mathbf{D}^b(B\mbox{-{\rm mod}})$ is standard.
\end{thm}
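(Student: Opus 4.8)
The plan is to reduce to the case where $A$ itself is triangular, and then to invoke the known result that derived equivalences out of a triangular algebra are standard.

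First I would observe that standardness is a transitive property in the following sense: if $G\colon \mathbf{D}^b(A'\mbox{-mod})\to \mathbf{D}^b(A\mbox{-mod})$ is a standard equivalence given by a two-sided tilting complex $Y$, and $F\colon \mathbf{D}^b(A\mbox{-mod})\to \mathbf{D}^b(B\mbox{-mod})$ is any triangle equivalence, then $F$ is standard if and only if the composite $F\circ G$ is standard. The ``only if'' direction is clear, since the composite of two standard equivalences is standard: if $F\cong X\otimes_A^\mathbf{L}-$ then $F\circ G\cong (X\otimes_A^\mathbf{L}Y)\otimes_{A'}^\mathbf{L}-$, and $X\otimes_A^\mathbf{L}Y$ is again a two-sided tilting complex. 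For the ``if'' direction, one uses that $G$ has a standard quasi-inverse $G^{-1}\cong Y^{-1}\otimes_A^\mathbf{L}-$ (here $Y^{-1} = \mathbf{R}\mathrm{Hom}_{A'}(Y, A')$ or the analogous construction), so from standardness of $F\circ G$ one recovers standardness of $F\cong (F\circ G)\circ G^{-1}$ by the same composition argument. So the key point is the existence of a standard quasi-inverse to a standard equivalence, which is classical (Rickard, Keller).

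Next, since $A$ is derived-triangular, choose a triangular algebra $A'$ and a triangle equivalence $G\colon \mathbf{D}^b(A'\mbox{-mod})\to \mathbf{D}^b(A\mbox{-mod})$. A priori $G$ need not be standard — that is precisely the kind of statement we are trying to prove — so I cannot directly apply the reduction above to $G$ itself. Instead I would apply the known result in the triangular case directly: Rickard's theorem, together with the refinement for triangular algebras (this is where the hypothesis and the keywords ``Orlov category'' point), gives that \emph{any} triangle equivalence with source $\mathbf{D}^b(A'\mbox{-mod})$ for $A'$ triangular is standard. In particular $G$ itself is standard. Now the reduction applies: $F$ is standard if and only if $F\circ G$ is standard; but $F\circ G\colon \mathbf{D}^b(A'\mbox{-mod})\to \mathbf{D}^b(B\mbox{-mod})$ is a triangle equivalence with triangular source, hence standard by the same result. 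Therefore $F$ is standard.

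The main obstacle is the input result that every triangle equivalence out of the bounded derived category of a triangular algebra is standard; this is the technical heart and presumably what the body of the paper establishes, via the theory of Orlov categories (a grading/combinatorial structure available precisely because the Ext-quiver has no oriented cycles) which allows one to lift a triangle functor on the derived category to an honest functor between DG or exact structures, producing the two-sided tilting complex. Granting that, the argument above is essentially formal, the only genuine checks being that two-sided tilting complexes compose and admit inverses in the expected way.
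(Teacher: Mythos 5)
Your argument is correct and follows essentially the same strategy as the paper: reduce to the triangular case by composing with a standard equivalence $\mathbf{D}^b(A'\mbox{-mod})\to\mathbf{D}^b(A\mbox{-mod})$, invoke the key technical result (via Orlov categories) that triangle equivalences out of a triangular source are standard, and conclude using that compositions and quasi-inverses of standard equivalences are standard. The only cosmetic difference is that you obtain the standard bridge equivalence by applying the triangular-case result to the given $G$, whereas the paper instead invokes Rickard's general existence result (Zimmermann, Proposition 6.5.5) to produce some standard $F'\colon\mathbf{D}^b(A'\mbox{-mod})\to\mathbf{D}^b(A\mbox{-mod})$; both choices are valid and lead to the same formal argument.
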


We observe that a derived-triangular algebra has finite global dimension. The converse is not true in general. Indeed, let $A$ be a non-triangular algebra with two simple modules which has finite global dimension. Then $A$ is not derived-triangular. Indeed, any triangular algebra $B$ which is derived equivalent to $A$ has two simple modules and thus is hereditary. This forces that the algebra $A$ is triangular, yielding a contradiction.

We recall that a piecewise hereditary algebra is triangular. In particular, Theorem \ref{thm:main} implies that the assumption on the standardness of the auto-equivalence in \cite[Section 4]{Kel} is superfluous.

The proof of Theorem \ref{thm:main} is a rather immediate application of \cite[Theorem 4.7]{AR}, which characterizes certain triangle functors between the bounded homotopy categories of Orlov categories. Here, we observe that the category of projective modules over a triangular algebra is naturally an Orlov category.

We refer to \cite{Rin, Zim} for unexplained notions in the representation theory of algebras.

\section{The bounded homotopy category of an Orlov category}

Let $\mathcal{A}$ be a $k$-linear additive category, which is Hom-finite and has split idempotents. Here, the Hom-finitess means that all the Hom spaces are finite dimensional. It follows that $\mathcal{A}$ is a  Krull-Schmidt category.

We denote by ${\rm Ind}\;\mathcal{A}$ a complete set of representatives of indecomposable objects in $\mathcal{A}$. The category $\mathcal{A}$ is called \emph{bricky} if the endomorphism algebra of  each indecomposable object is a division algebra.

 We slightly generalize \cite[Definition 4.1]{AR}.  A bricky category $\mathcal{A}$ is called an \emph{Orlov category} provided that there is a degree function ${\rm deg}\colon {\rm Ind}\; \mathcal{A}\rightarrow \mathbb{Z}$ with the following property: for any indecomposable objects $P, P'$ having ${\rm Hom}_\mathcal{A}(P, P')\neq 0$, we have that $P\simeq P'$ or ${\rm deg}(P)>{\rm deg}(P')$. An object $X$ in $\mathcal{A}$ is homogeneous of degree $n$, if it is isomorphic to a finite direct sum of indecomposables of degree $n$. An additive functor $F\colon \mathcal{A}\rightarrow \mathcal{A}$ is \emph{homogeneous} if it sends homogeneous objects to homogenous objects and preserves their degrees.

Let $A$ be a finite dimensional $k$-algebra. We denote by $\{S_1, S_2, \cdots, S_n\}$ a complete set of representatives of simple $A$-modules. Denote by $P_i$ the projective cover of $S_i$. We recall that the \emph{Ext-quiver} $Q_A$ of $A$ is defined as follows. The vertex set of $Q_A$ equals $\{1, 2, \cdots, n\}$, and there is a unique arrow from $i$ to $j$ provided that ${\rm Ext}^1_A(S_i, S_j)\neq 0$. The algebra $A$ is \emph{triangular} provided that $Q_A$ has no oriented cycles.

Let $A$ be a triangular algebra. We denote by $Q_A^0$ the set of sources in $Q_A$. Here, a vertex is a source if there is no arrow ending at it. For each $d\geq 1$, we define the set $Q_A^d$ inductively,  such that a vertex $i$ belongs to $Q_A^d$ if and only if any arrow ending at $i$ necessarily starts at $\bigcup_{0\leq m\leq d-1} Q_A^{m}$. It follows that $Q_A^0\subseteq Q_A^1\subseteq Q_A^2\subseteq \cdots$ and that $\bigcup_{d\geq 0}Q_A^d=\{1, 2, \cdots, n\}$.  We mention that this construction can be found in \cite[p.42]{Rin}.

We denote by $A\mbox{-proj}$ the category of finite dimensional projective $A$-modules. Then $\{P_1, P_2, \cdots, P_n\}$ is a complete set of representatives of indecomposables in $A\mbox{-proj}$. For each $1\leq i\leq n$, we define ${\rm deg}(P_i)=d$ such that $i\in Q_A^d$ and $i\notin Q_A^{d-1}$.

The following example of an Orlov category seems to be well known.

\begin{lem}\label{lem:exm}
Let $A$ be a triangular algebra. Then $A\mbox{-{\rm proj}}$ is an Orlov category with the above degree function. Moreover, any equivalence $F\colon A\mbox{-{\rm proj}}\rightarrow A\mbox{-{\rm proj}}$ is homogeneous.
\end{lem}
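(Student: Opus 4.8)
The plan is to verify the defining properties of an Orlov category directly, and then to recognize the degree function as an intrinsic invariant of the category $A\mbox{-proj}$, which forces equivalences to be homogeneous. First, the ambient hypotheses are automatic: $A\mbox{-proj}$ is additive, Hom-finite and idempotent-split (a summand of a projective module is projective), hence Krull--Schmidt, with $\{P_1,\ldots,P_n\}$ a complete set of indecomposables. For brickiness, recall that $Q_A$ being acyclic forces every composition factor $S_j$ of ${\rm rad}\,P_i$ to admit a path of positive length from $i$ to $j$ (this is the standard description of ${\rm rad}^m P_i/{\rm rad}^{m+1}P_i$, underlying the construction of the $Q_A^d$); in particular $S_i$ is not a composition factor of ${\rm rad}\,P_i$, so ${\rm Hom}_A(P_i,{\rm rad}\,P_i)=0$. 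A non-invertible endomorphism of the indecomposable $P_i$ has image inside ${\rm rad}\,P_i$, hence vanishes, so ${\rm End}_A(P_i)$ is a finite dimensional division $k$-algebra. Moreover ${\rm deg}(P_i)$ as defined is exactly the length of a longest directed path in $Q_A$ ending at the vertex $i$: unwinding the inductive definition of the $Q_A^d$, one has ${\rm deg}(P_i)=0$ for a source and ${\rm deg}(P_i)=1+\max\{{\rm deg}(P_j)\mid\text{there is an arrow }j\to i\}$ otherwise, which is precisely the recursion for the longest ending path. Finally, if ${\rm Hom}_A(P_i,P_j)\neq 0$ with $P_i\not\simeq P_j$, then, since ${\rm Hom}_A(P_i,-)$ is exact and ${\rm Hom}_A(P_i,S_l)\neq 0$ iff $l=i$, the simple $S_i$ is a composition factor of $P_j$, and as $i\neq j$ it is even a composition factor of ${\rm rad}\,P_j$, so there is a path of positive length from $j$ to $i$. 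Concatenating a longest path ending at $j$ with this one gives a path ending at $i$ of length at least ${\rm deg}(P_j)+1$, whence ${\rm deg}(P_i)>{\rm deg}(P_j)$. This proves that $A\mbox{-proj}$ is an Orlov category. (If $A$ is not basic, one first replaces it by a basic algebra; this changes neither $Q_A$, the degree function, nor any Hom-theoretic statement used here.)

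For the second assertion, let $F\colon A\mbox{-proj}\rightarrow A\mbox{-proj}$ be an equivalence. Being an equivalence it is additive and permutes the indecomposables, say $F(P_i)\simeq P_{\sigma(i)}$ for a permutation $\sigma$ of $\{1,\ldots,n\}$, and it preserves non-vanishing of Hom-spaces: ${\rm Hom}_A(P_i,P_j)\neq 0$ if and only if ${\rm Hom}_A(P_{\sigma(i)},P_{\sigma(j)})\neq 0$. The crucial observation is that the degree is intrinsic to $A\mbox{-proj}$: ${\rm deg}(P)$ equals the largest $m$ for which there is a chain $P=Q_0,Q_1,\ldots,Q_m$ of pairwise non-isomorphic indecomposables with ${\rm Hom}_A(Q_{l-1},Q_l)\neq 0$ for every $1\le l\le m$. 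Indeed, the Orlov axiom forces the degrees to decrease strictly along such a chain, so $m\le{\rm deg}(P)$; conversely, translating non-vanishing of Hom-spaces back into paths as in the previous paragraph, a longest path ending at the vertex of $P$ yields, read backwards, such a chain of length ${\rm deg}(P)$. Since the right-hand side of this description visibly transports along $F$, we get ${\rm deg}(P_{\sigma(i)})={\rm deg}(P_i)$ for all $i$. Hence $F$ carries a homogeneous object $\bigoplus_i P_i^{a_i}$ of degree $d$ (all $P_i$ of degree $d$) to $\bigoplus_i P_{\sigma(i)}^{a_i}$, which is again homogeneous of degree $d$; that is, $F$ is homogeneous.

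The genuinely routine ingredient is the standard dictionary between non-vanishing of the ${\rm Hom}$-spaces among the $P_i$ and oriented paths in $Q_A$. I expect the only point that needs care to be the intrinsic characterization of ${\rm deg}$ in the second paragraph: one must make sure that a longest chain of indecomposables with consecutive non-vanishing Hom has the same length as a longest path ending at the corresponding vertex --- the subtlety being that a nonzero Hom-space records a path of arbitrary positive length rather than a single arrow, so the acyclicity of $Q_A$ must be kept in play when concatenating. This is a mild obstacle rather than a serious one.
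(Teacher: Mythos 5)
Your proof is correct, and for the second assertion it takes a genuinely different route from the paper. For the Orlov-category part, you and the paper agree in substance: the paper quotes the well-known isomorphism $\mathrm{End}_A(P_i)\simeq \mathrm{End}_A(S_i)$ for brickiness and the dictionary between nonzero $\mathrm{Hom}_A(P_i,P_j)$ and paths $j\to i$, plus the observation that an arrow $a\to b$ with $b\in Q_A^d$ has $a\in Q_A^{d-1}$; you unwind these to the radical-filtration/composition-factor argument and to the ``longest path ending at $i$'' reading of $\deg(P_i)$, which is a cleaner packaging of the same induction.

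For homogeneity of an auto-equivalence $F$, the paper extends $F$ to an auto-equivalence of $A\mbox{-mod}$, invokes that this induces an automorphism of the Ext-quiver $Q_A$, and notes that such an automorphism preserves each $Q_A^d$. You instead give an intrinsic, purely $\mathrm{Hom}$-theoretic characterization of the degree inside $A\mbox{-proj}$ itself: $\deg(P)$ is the maximal length of a chain of pairwise non-isomorphic indecomposables with consecutive nonzero $\mathrm{Hom}$-spaces starting at $P$. Since this datum is visibly preserved by any equivalence of $A\mbox{-proj}$, homogeneity follows immediately. This buys you a more self-contained argument (no need to extend $F$ to $A\mbox{-mod}$ or to appeal to invariance of the Ext-quiver), and it isolates the useful general point that on an Orlov category whose degree function has this ``longest-chain'' form, the degree is an intrinsic categorical invariant. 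The paper's route is shorter on the page but leans on the unproved claim that $F$ extends and induces a quiver automorphism preserving the filtration. One small caveat in your write-up: in the chain-to-path direction you implicitly use acyclicity of $Q_A$ to guarantee that concatenated paths have distinct vertices — you flag this yourself, and it is indeed the only point requiring care.
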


\begin{proof}
Since $A$ is triangular, it is well known that ${\rm End}_A(P_i)$ is isomorphic to ${\rm End}_A(S_i)$, which is a division algebra. Then $A\mbox{-{\rm proj}}$ is bricky. We recall that for $i\neq j$ with ${\rm Hom}_A(P_i, P_j)\neq 0$, there is a path from $j$ to $i$ in $Q_A$. From the very construction, we infer that for an arrow $\alpha\colon a\rightarrow b$ with $b\in Q_A^{d}$, we have $a\in Q_A^{d-1}$. Then we are done by the following consequence: if there is a path from $j$ to $i$ in $Q_A$, then ${\rm deg}(P_j)<{\rm deg}(P_i)$.

For the final statement, we observe that the equivalence $F$ extends to an auto-equivalence on $A\mbox{-mod}$, and thus induces an automorphism of $Q_A$. The automorphism preserves the subsets $Q_A^d$. Consequently, the equivalence $F$ preserves degrees, and is homogeneous.
\end{proof}

Let $\mathcal{A}$ be a $k$-linear additive category as above. We denote by $\mathbf{K}^b(\mathcal{A})$ the homotopy category of bounded complexes in $\mathcal{A}$. Here, a complex $X$ is visualized as $\cdots \rightarrow X^{n-1}\stackrel{d^{n-1}_X}\rightarrow X^n \stackrel{d^n_X}\rightarrow X^{n+1}\rightarrow \cdots$, where the differentials satisfy $d^n_X\circ d_X^{n-1}=0$. The translation functor on $\mathbf{K}^b(\mathcal{A})$ is denoted by $[1]$, whose $n$-th power is denoted by $[n]$.

We view an object $A$ in $\mathcal{A}$ as a stalk complex concentrated at degree zero, which is still denoted by $A$. In this way, we identify $\mathcal{A}$ as a full subcategory of $\mathbf{K}^b(\mathcal{A})$.

We are interested in triangle functors on $\mathbf{K}^b(\mathcal{A})$. We recall that a \emph{triangle functor} $(F, \theta)$ consists of an additive functor $F\colon \mathbf{K}^b(\mathcal{A})\rightarrow \mathbf{K}^b(\mathcal{A})$ and a natural isomorphism $\theta\colon [1]F\rightarrow F[1]$, which preserves triangles. More precisely, for any triangle $X\rightarrow Y\rightarrow Z\stackrel{h}\rightarrow X[1]$ in $\mathbf{K}^b(\mathcal{A})$, the sequence $FX\rightarrow FY\rightarrow FZ \stackrel{\theta_X\circ F(h)}\longrightarrow (FX)[1]$ is a triangle. We refer to $\theta$ as the \emph{commutating isomorphism} for $F$. A natural transformation between triangle functors is required to respect the two commutating isomorphisms.

For a triangle functor $(F, \theta)$, the commutating isomorphism $\theta$ is \emph{trivial} if $[1]F=F[1]$ and $\theta$ is the identity transformation. In this case, we suppress $\theta$ and write $F$ for the triangle functor.

Any additive functor $F\colon \mathcal{A}\rightarrow \mathcal{A}$ gives rise to a triangle functor $\mathbf{K}^b(F)\colon \mathbf{K}^b(\mathcal{A})\rightarrow \mathbf{K}^b(\mathcal{A})$, which acts on complexes componentwise. The commuting isomorphism for $\mathbf{K}^b(F)$ is trivial. Similarly, any natural transformation $\eta\colon F\rightarrow F'$ extends to a natural transformation $\mathbf{K}^b(\eta)\colon \mathbf{K}^b(F)\rightarrow \mathbf{K}^b(F')$ between triangle functors.

The following fundamental result is due to \cite[Theorem 4.7]{AR}.

\begin{prop}\label{prop:AR}
Let $\mathcal{A}$ be an Orlov category, and let $(F, \theta)\colon \mathbf{K}^b(\mathcal{A})\rightarrow \mathbf{K}^b(\mathcal{A})$ be a triangle functor such that $F(\mathcal{A})\subseteq \mathcal{A}$. We assume further that $F|_\mathcal{A}\colon \mathcal{A}\rightarrow \mathcal{A}$ is homogeneous. Let $F_1, F_2\colon \mathcal{A}\rightarrow \mathcal{A}$ be two homogeneous functors.
\begin{enumerate}
\item There is a unique natural isomorphism $(F, \theta)\rightarrow \mathbf{K}^b(F|_\mathcal{A})$ of triangle functors, which is the identity on the full subcategory $\mathcal{A}$.
    \item Any natural transformation $\mathbf{K}^b(F_1)\rightarrow \mathbf{K}^b(F_2)$ of triangle functors is of the form $\mathbf{K}^b(\eta)$ for a unique natural transformation $\eta\colon F_1\rightarrow F_2$.
\end{enumerate}
\end{prop}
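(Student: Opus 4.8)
The plan is to prove both parts by a dévissage along a canonical \emph{degree filtration} of complexes in $\mathbf{K}^b(\mathcal{A})$, the essential input being a one-directional vanishing of morphism spaces forced by the Orlov condition.

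First I would pass to minimal complexes. As $\mathcal{A}$ is Krull--Schmidt, every object of $\mathbf{K}^b(\mathcal{A})$ is isomorphic to one whose differentials have no invertible component between indecomposable summands of consecutive terms; as $\mathcal{A}$ is bricky, a minimal complex moreover has no nonzero endomorphism component, so by the Orlov condition every nonzero component $P\to P'$ of a differential satisfies $\mathrm{deg}(P)>\mathrm{deg}(P')$. Hence, for a minimal complex $X$ and each integer $d$, the sum of the indecomposable summands of degree $\le d$ occurring in the terms of $X$ is a subcomplex $\tau_{\le d}X$, with $\mathrm{gr}_d X:=\tau_{\le d}X/\tau_{\le d-1}X$ homogeneous of degree $d$ and with zero differential; so $X$ carries a finite filtration by subcomplexes with degreewise split subquotients $\mathrm{gr}_d X$, giving rise to triangles in $\mathbf{K}^b(\mathcal{A})$. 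Alongside this I would record the \emph{Key Vanishing}: if all indecomposable summands occurring in (a minimal model of) $Y$ have degree strictly smaller than all those occurring in $Z$, then $\mathrm{Hom}_{\mathbf{K}^b(\mathcal{A})}(Y,Z)=0$, since every component of a chain map $Y\to Z$ is then a morphism in $\mathcal{A}$ from a lower-degree indecomposable to a higher-degree one. Finally, a homogeneous functor sends a complex supported in degrees $\le d$ (resp.\ in degree $d$) to one of the same kind, so $F$ and $\mathbf{K}^b(F|_\mathcal{A})$ are both compatible with this filtration.

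Now the dévissage. On a zero-differential complex --- a finite sum of stalks $A[n]$, $A\in\mathcal{A}$ --- the comparison isomorphism is forced: it is the identity on $\mathcal{A}$, is determined on $A[n]$ by the commuting isomorphism $\theta$, and extends additively, so it is defined canonically on each $\mathrm{gr}_d X$; in particular this handles width-one complexes. Proceeding by induction on the number of graded pieces of a minimal complex $X$ and peeling off the top one, I would glue the isomorphisms already available on $\tau_{\le d-1}X$ and on $\mathrm{gr}_d X$ --- compatible with the connecting map $\mathrm{gr}_d X\to(\tau_{\le d-1}X)[1]$ by the inductive naturality --- over the triangle obtained by applying $F$ to $\tau_{\le d-1}X\to\tau_{\le d}X\to\mathrm{gr}_d X\to$. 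Here the Key Vanishing (for instance in the form $\mathrm{Hom}_{\mathbf{K}^b(\mathcal{A})}(F(\tau_{\le d-1}X),F(\mathrm{gr}_d X)[-1])=0$, using that by the inductive identification the source has summands in degrees $\le d-1$ and the target in degree $d$) is what controls the gluing. This produces a natural isomorphism of triangle functors, identically the identity on $\mathcal{A}$. For uniqueness, and for statement (2), I would run the same induction on the difference of two candidates: it is a natural transformation of triangle functors vanishing on every $\mathrm{gr}_d$, hence --- being natural --- preserving the induced filtration of $F(X)$ and inducing $0$ on graded pieces, and the Key Vanishing propagates $0$ up the filtration. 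For (2), a natural transformation $\nu\colon\mathbf{K}^b(F_1)\to\mathbf{K}^b(F_2)$ of triangle functors restricts on $\mathcal{A}$ to $\eta:=\nu|_\mathcal{A}$ (as $\mathbf{K}^b(F_i)|_\mathcal{A}=F_i$), so $\nu$ and $\mathbf{K}^b(\eta)$ agree on all graded pieces and therefore coincide; uniqueness of $\eta$ is clear from $\eta=\nu|_\mathcal{A}$.

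The step I expect to be the main obstacle --- and the real content of \cite[Theorem 4.7]{AR} --- is precisely this rigidity. The Key Vanishing holds only from low degree to high degree, whereas in the gluing and in the propagation one also meets $\mathrm{Hom}$-groups in the opposite direction (between the high-degree quotient and the low-degree subcomplex), which do not vanish in general; one must therefore set the triangles up, and choose the order of the induction, so that every obstruction and every ambiguity is of the ``good'' type, the low-degree part always appearing as a subcomplex and the high-degree part as a quotient. Making this bookkeeping airtight --- equivalently, showing that a triangle functor whose restriction to $\mathcal{A}$ is homogeneous, and a natural transformation between two such, are genuinely recoverable from their behaviour on the homogeneous strata --- is the technical heart of the matter. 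The only departure from \cite{AR} is that $\mathcal{A}$ is here assumed merely bricky, rather than having $k$ as the endomorphism ring of every indecomposable; but the argument uses only that nonzero endomorphisms of indecomposables are invertible, so it goes through unchanged.
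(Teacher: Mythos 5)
The paper's ``proof'' of this proposition is essentially a pointer to the literature: existence in (1) is quoted from \cite[Theorem 4.7]{AR}, uniqueness is attributed to the diagram (4.10) and Lemma 4.5(2) of \cite{AR} via induction on the support of a complex, and (2) is then deduced from the uniqueness. Your write-up is therefore an attempt to reconstruct the Achar--Riche argument that the paper defers to, not an alternative to anything the paper actually proves. Structurally the reconstruction is faithful to what \cite{AR} does: pass to minimal complexes, observe that the Orlov degree function endows a minimal complex with a finite filtration $\tau_{\le d}X$ by subcomplexes whose graded pieces are homogeneous with zero differential, note that $\theta$ pins down the comparison on stalk complexes, and then run a d\'evissage along the filtration. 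Your remark that only brickiness is needed (not $\operatorname{End}\cong k$) is also right and matches the paper's declared ``slight generalization'' of \cite[Definition 4.1]{AR}.

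You flag a gap, and it is genuine, but the diagnosis needs sharpening. The ambiguity of a filler in a map of triangles $\tau_{\le d-1}X\to X\to\mathrm{gr}_d X\to$ (with $F$ and $G=\mathbf{K}^b(F|_\mathcal{A})$ applied) is measured by $\operatorname{Hom}\bigl(F(\mathrm{gr}_d X),\, G(\tau_{\le d-1}X)\bigr)$ --- morphisms from an object of degree $d$ to one of degree $\le d-1$. That is precisely the direction the Orlov condition does \emph{not} kill; the group $\operatorname{Hom}\bigl(F(\tau_{\le d-1}X),\, F(\mathrm{gr}_d X)[-1]\bigr)$ you display is in the good direction, but it is not the group that controls the filler. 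Moreover no ``choice of the order of the induction'' can reverse this: in any degree filtration the low-degree part is a subcomplex and the high-degree part a quotient, so a filler ambiguity between two such triangles automatically runs high-to-low. What actually closes the argument in \cite{AR} is naturality with respect to \emph{all} chain maps, not merely those internal to a single chosen triangle; those extra constraints (encoded in their diagram (4.10) and Lemma 4.5(2)) are what rigidify the filler and make the d\'evissage well-defined. As written, your plan never invokes this and so does not close. Since the paper itself defers the whole proof to \cite{AR} and you honestly do too, the two accounts are at comparable depth; but the specific $\operatorname{Hom}$-group you identify as the control, and the idea that reordering the induction could flip the obstruction's direction, are both off.
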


\begin{proof}
The existence of the natural isomorphism in (1) is due to \cite[Theorem 4.7]{AR}; compare \cite[Remark 4.8]{AR}. The uniqueness follows from the commutative diagram (4.10) and Lemma 4.5(2) in \cite{AR}, by induction on the support of a complex in the sense of \cite[Subsection 4.1]{AR}. Here, we emphasize that the commutating isomorphism $\theta$ is used in the construction of the natural isomorphism on stalk complexes; compare the second paragraph in \cite[p.1541]{AR}.

The statement (2) follows from the uniqueness part of (1).
\end{proof}

Recall that $\mathbf{D}^b(A\mbox{-{\rm mod}})$ denotes the bounded derived category of $A\mbox{-mod}$. We identify $A\mbox{-mod}$ as the full subcategory of $\mathbf{D}^b(A\mbox{-{\rm mod}})$ formed by stalk complexes concentrated at degree zero. We denote by $H^n(X)$ the $n$-th cohomology of a complex $X$.

The following observation is immediate.

\begin{lem}
Let $A$ be a finite dimensional algebra, and let $F\colon \mathbf{D}^b(A\mbox{-{\rm mod}})\rightarrow \mathbf{D}^b(A\mbox{-{\rm mod}})$ be a triangle equivalence with $F(A)\simeq A$. Then we have $F(A\mbox{-{\rm mod}})=A\mbox{-{\rm mod}}$, and thus the restricted  equivalence $F|_{A\mbox{-}{\rm mod}}\colon A\mbox{-{\rm
mod}}\rightarrow A\mbox{-{\rm mod}}$.
\end{lem}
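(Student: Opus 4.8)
The plan is to show that $F(A\mbox{-mod})=A\mbox{-mod}$ by characterizing $A\mbox{-mod}$ intrinsically inside $\mathbf{D}^b(A\mbox{-mod})$ in a way that is visibly preserved by triangle equivalences carrying $A$ to something isomorphic to $A$. The key observation is that since $F$ is a triangle equivalence and $F(A)\simeq A$, it sends the additive closure of $A$ (that is, $A\mbox{-proj}$, viewed as a full subcategory of the derived category) onto itself. Then one uses the fact that every object of $A\mbox{-mod}$ admits a finite projective resolution? No — $A$ need not have finite global dimension here. Instead I would exploit a torsion-pair/t-structure argument.

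More precisely, first I would recall that $A\mbox{-mod}$ is the heart of the standard $t$-structure on $\mathbf{D}^b(A\mbox{-mod})$, with aisle $\mathbf{D}^{\leq 0}$ and co-aisle $\mathbf{D}^{\geq 0}$. A triangle equivalence need not preserve the standard $t$-structure in general, so the content is to pin down the heart using $A$ itself. The relevant characterization: a complex $Y$ lies in $A\mbox{-mod}$ if and only if $\mathrm{Hom}_{\mathbf{D}^b(A\mbox{-mod})}(P, Y[n])=0$ for all $P\in A\mbox{-proj}$ and all $n\neq 0$, together with the condition that $Y$ is a quotient-type object — but the cleaner route is: $Y\in A\mbox{-mod}$ iff $\mathrm{Hom}(A, Y[n])=H^n(Y)=0$ for all $n<0$ (this says $Y\in\mathbf{D}^{\geq 0}$) and, dually, $Y$ lies in $\mathbf{D}^{\leq 0}$. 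The subcategory $\mathbf{D}^{\leq 0}$ is the smallest full subcategory containing $A$, closed under $[1]$, extensions, and direct summands; since $F$ is a triangle equivalence with $F(A)\simeq A$, it preserves this subcategory. Dually $\mathbf{D}^{\geq 0}=\{Y\mid \mathrm{Hom}(A[n],Y)=0\ \forall\, n>0\}$, and $F(A[n])=F(A)[n]\simeq A[n]$, so $F$ preserves $\mathbf{D}^{\geq 0}$ as well. Hence $F$ preserves the heart $\mathbf{D}^{\leq 0}\cap\mathbf{D}^{\geq 0}=A\mbox{-mod}$, and since $F$ is an equivalence we get $F(A\mbox{-mod})=A\mbox{-mod}$.

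The steps in order would be: (i) express $\mathbf{D}^{\leq 0}(A\mbox{-mod})$ as the smallest full subcategory of $\mathbf{D}^b(A\mbox{-mod})$ closed under positive shifts, extensions (cones) and direct summands that contains the stalk complex $A$; (ii) observe $F$ maps this subcategory into itself because $F(A)\simeq A$ and $F$ is exact, and maps it onto itself because $F^{-1}$ does the same; (iii) describe $\mathbf{D}^{\geq 0}(A\mbox{-mod})$ as the right orthogonal $\{Y\mid \mathrm{Hom}(X,Y)=0\ \forall\, X\in\mathbf{D}^{\leq -1}\}$, equivalently by the vanishing $\mathrm{Hom}_{\mathbf{D}^b}(A,Y[n])=0$ for all $n<0$, and note $F$ preserves it by the same reasoning applied to shifts of $A$; (iv) intersect to conclude $F(A\mbox{-mod})=A\mbox{-mod}$; (v) the restricted functor $F|_{A\mbox{-mod}}$ is then automatically an equivalence of abelian categories, being the restriction of an equivalence to subcategories it identifies.

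The main obstacle is step (i): one must verify that the aisle of the standard $t$-structure really is generated, as a full subcategory closed under the stated operations, by the single object $A$. For $\mathbf{D}^{\leq 0}$ this is standard — every complex in $\mathbf{D}^{\leq 0}(A\mbox{-mod})$ is quasi-isomorphic to a bounded-above complex of finitely generated projectives concentrated in non-positive degrees, hence is built from finite iterated extensions of shifts $A[m]$, $m\geq 0$, up to summands and the bounded truncation; care is needed only because we work in the bounded derived category, so one should argue with the brutal truncations of a (finite, since complexes are bounded) projective resolution. Once this generation statement is in place the rest is a formal orthogonality computation, so I would expect essentially no difficulty beyond it; indeed the paper calls the lemma "immediate," which fits this assessment.
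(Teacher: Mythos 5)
Your overall plan — show $F$ preserves the standard $t$-structure and hence its heart — is sound, and for $\mathbf{D}^{\geq 0}$ you use exactly the right tool: the identification $H^n(Y)\simeq\operatorname{Hom}_{\mathbf{D}^b(A\text{-mod})}(A[-n],Y)$, which transports cleanly along $F$ because $F(A[-n])\simeq A[-n]$ and $F$ is fully faithful. But step (i), your treatment of $\mathbf{D}^{\leq 0}$, contains a genuine error. You assert that $\mathbf{D}^{\leq 0}(A\text{-mod})$, inside $\mathbf{D}^b(A\text{-mod})$, is the smallest full subcategory containing the stalk complex $A$ and closed under $[1]$, extensions, and direct summands. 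That subcategory is contained in the thick subcategory generated by $A$, i.e.\ in the perfect complexes $\mathbf{K}^b(A\text{-proj})$. The lemma imposes no finiteness of global dimension, and when $\operatorname{gl.dim} A=\infty$ the inclusion is strict: for $A=k[x]/(x^2)$ the simple module $k$ lies in $\mathbf{D}^{\leq 0}$ but is not perfect, hence is not in your generated subcategory. Your parenthetical remark about taking brutal truncations of a ``finite, since complexes are bounded'' projective resolution misdiagnoses the problem: boundedness of the complex $Y$ does not give a bounded projective resolution, and no amount of truncation repairs this because the obstruction is precisely that $Y$ need not be perfect.

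The fix is immediate and collapses your argument onto the paper's. Use the cohomological description for both aisles: $Y\in\mathbf{D}^{\leq 0}$ iff $\operatorname{Hom}(A[-n],Y)=H^n(Y)=0$ for all $n>0$, and $Y\in\mathbf{D}^{\geq 0}$ iff $H^n(Y)=0$ for all $n<0$. Then the one computation
\[
H^n(FX)\;\simeq\;\operatorname{Hom}(A[-n],FX)\;\simeq\;\operatorname{Hom}(F(A[-n]),FX)\;\simeq\;\operatorname{Hom}(A[-n],X)\;=\;H^n(X)
\]
shows $F$ preserves cohomology, hence sends stalk complexes to stalk complexes; applying the same to a quasi-inverse (noting $F^{-1}(A)\simeq A$) gives $F(A\text{-mod})=A\text{-mod}$. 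This is exactly the paper's proof. So your approach is not really an alternative route; it is the same route with an unnecessary and, as stated, incorrect detour through a generation statement that only holds under a finite-global-dimension hypothesis the lemma does not make.
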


\begin{proof}
We use the canonical isomorphisms $H^n(X)\simeq {\rm Hom}_{\mathbf{D}^b(A\mbox{-}{\rm mod})}(A[-n], X)$. It follows that both $F$ and its quasi-inverse send stalk complexes to stalk complexes. Then we are done.
\end{proof}

We assume that we are given an equivalence $F\colon A\mbox{-{\rm mod}}\rightarrow A\mbox{-{\rm mod}}$ with $F(A)\simeq A$. Then there is an algebra automorphism $\sigma\colon A\rightarrow A$ such that $F$ is isomorphic to $_\sigma A_1\otimes_A-$. Here, the $A$-bimodule $_\sigma A_1$ is given  by the regular right $A$-module, where the left $A$-module is twisted by $\sigma$. This bimodule is invertible and thus viewed as a two-sided tilting complex. We refer to \cite[Section 6.5]{Zim} for details on two-sided tilting complexes and standard equivalences.

We now combine the above results.

\begin{prop}\label{prop:part}
Let $A$ be a triangular algebra, and let  $(F, \theta)\colon \mathbf{D}^b(A\mbox{-{\rm mod}})\rightarrow \mathbf{D}^b(A\mbox{-{\rm mod}})$ be a triangle equivalence with  $F(A)\simeq A$. We recall the algebra automorphism $\sigma$ given by the restricted equivalence $F|_{A\mbox{-}{\rm mod}}$, and the $A$-bimodule $_\sigma A_1$. Then there is a natural isomorphism $(F, \theta)\rightarrow {_\sigma A_1}\otimes^\mathbf{L}_A-$ of triangle functors. In particular, the triangle equivalence $(F, \theta)$ is standard.
\end{prop}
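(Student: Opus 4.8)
The plan is to reduce the statement to Proposition~\ref{prop:AR} via a standard model-theoretic observation: the bounded derived category $\mathbf{D}^b(A\mbox{-mod})$ is triangle equivalent to $\mathbf{K}^b(A\mbox{-proj})$, because $A$ has finite global dimension (being triangular). Fix such an equivalence and transport $(F,\theta)$ along it to obtain a triangle auto-equivalence $(F',\theta')$ of $\mathbf{K}^b(A\mbox{-proj})$ with $F'(A)\simeq A$. By the preceding Lemma, the restriction of $F'$ to $A\mbox{-mod}$ (equivalently, the induced functor on stalk complexes) sends $A\mbox{-mod}$ to itself; but we need more, namely that $F'$ sends the subcategory $A\mbox{-proj}\subseteq \mathbf{K}^b(A\mbox{-proj})$ into itself. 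This follows since $P_i$ is a stalk complex of a projective module, $F'(P_i)$ is again a stalk complex (by the Lemma, applied after noting $F'$ restricts to an auto-equivalence of $A\mbox{-mod}$), and a stalk complex in $\mathbf{K}^b(A\mbox{-proj})$ that is a direct summand of $A^m$ up to isomorphism in the derived category is projective. So $F'(A\mbox{-proj})\subseteq A\mbox{-proj}$.

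Next I would identify the restricted functor. The equivalence $F'|_{A\mbox{-proj}}\colon A\mbox{-proj}\rightarrow A\mbox{-proj}$ is, by Lemma~\ref{lem:exm}, homogeneous. By the Morita-theoretic description recalled just before Proposition~\ref{prop:part}, the restricted equivalence $F'|_{A\mbox{-mod}}$ is isomorphic to $_\sigma A_1\otimes_A-$ for an algebra automorphism $\sigma$, and correspondingly $F'|_{A\mbox{-proj}}$ is isomorphic to $\mathbf{K}^b({_\sigma A_1}\otimes_A-)$ restricted to degree-zero stalks; since $_\sigma A_1\otimes_A-$ is an additive functor $A\mbox{-proj}\rightarrow A\mbox{-proj}$, call it $G$, it is itself homogeneous (being an equivalence, by Lemma~\ref{lem:exm}). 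Now apply Proposition~\ref{prop:AR}(1) to the triangle functor $(F',\theta')$: there is a (unique) natural isomorphism $(F',\theta')\rightarrow \mathbf{K}^b(F'|_{A\mbox{-proj}})$ of triangle functors. Composing with $\mathbf{K}^b$ of the natural isomorphism $F'|_{A\mbox{-proj}}\cong G$ (which is $\mathbf{K}^b$ of a natural transformation by part (2) of the Proposition, or simply by functoriality of $\mathbf{K}^b$), we get $(F',\theta')\cong \mathbf{K}^b(G)$ as triangle functors. But under the equivalence $\mathbf{K}^b(A\mbox{-proj})\simeq \mathbf{D}^b(A\mbox{-mod})$, the functor $\mathbf{K}^b(G)=\mathbf{K}^b({_\sigma A_1}\otimes_A-)$ corresponds precisely to the derived tensor functor $_\sigma A_1\otimes^\mathbf{L}_A-$, because $_\sigma A_1$ is projective as a right $A$-module and hence no derived correction is needed. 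Transporting back along the equivalence yields the desired natural isomorphism $(F,\theta)\rightarrow {_\sigma A_1}\otimes^\mathbf{L}_A-$ of triangle functors.

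Finally, since $_\sigma A_1$ is an invertible $A$-bimodule it is a two-sided tilting complex, so the existence of this natural isomorphism of triangle functors is exactly the assertion that $(F,\theta)$ is standard.

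The main obstacle I anticipate is bookkeeping with the commuting isomorphisms $\theta$ under the chain of equivalences $\mathbf{D}^b(A\mbox{-mod})\simeq \mathbf{K}^b(A\mbox{-proj})$ and back: one must check that conjugating a triangle functor by a triangle equivalence again yields a triangle functor (with an explicitly determined commuting isomorphism), that Proposition~\ref{prop:AR}(1) applies to this conjugate, and that the final transported isomorphism is compatible with $\theta$ rather than merely with the underlying additive functors. None of these steps is deep, but getting the $\theta$'s to match requires care — this is precisely where the uniqueness clause in Proposition~\ref{prop:AR}(1) does the work, pinning the natural isomorphism down on stalk complexes where $\theta$ enters, so that the comparison propagates coherently to all of $\mathbf{K}^b(A\mbox{-proj})$.
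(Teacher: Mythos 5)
Your proposal follows essentially the same route as the paper: identify $\mathbf{D}^b(A\mbox{-mod})$ with $\mathbf{K}^b(A\mbox{-proj})$ via finite global dimension, observe (via the preceding Lemma and Lemma~\ref{lem:exm}) that the restriction of $F$ to $A\mbox{-proj}$ is a homogeneous equivalence isomorphic to $_\sigma A_1\otimes_A -$, and then apply Proposition~\ref{prop:AR}(1) to obtain the natural isomorphism of triangle functors with $\mathbf{K}^b({_\sigma A_1}\otimes_A -)$, which is the derived tensor. You spell out a few steps the paper leaves implicit (that the restricted functor lands in $A\mbox{-proj}$, the $\theta$-bookkeeping under the identification), but the argument and its reliance on the Achar--Riche result are the same.
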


\begin{proof}
Since the algebra $A$ is triangular, it has finite global dimension. The natural functor $\mathbf{K}^b(A\mbox{-proj})\rightarrow \mathbf{D}^b(A\mbox{-{\rm mod}})$ is a triangle equivalence. We identify these two categories. Therefore, the triangle functor $(F, \theta)\colon \mathbf{K}^b(A\mbox{-proj})\rightarrow \mathbf{K}^b(A\mbox{-proj})$ restricts to an equivalence $F|_{A\mbox{-}{\rm proj}}$, which is isomorphic to ${_\sigma A_1}\otimes_A-$. By Lemma \ref{lem:exm}, the statements in Proposition \ref{prop:AR} apply in our situation. Consequently, we have an isomorphism between $(F, \theta)$ and $\mathbf{K}^b({_\sigma A_1}\otimes_A-)$. Then we are done.
\end{proof}

\section{The proof of Theorem \ref{thm:main}}

We now prove Theorem \ref{thm:main}. In what follows, for simplicity, when writing a triangle functor we suppress its commutating isomorphism.

We first assume that the algebra $A$ is triangular. The complex $F(A)$ is a one-sided tilting complex. By \cite[Theorem 6.4.1]{Zim}, there is a two-sided tilting complex $X$ of $B$-$A$-bimodules with an isomorphism $X\rightarrow F(A)$ in $\mathbf{D}^b(B\mbox{-{\rm mod}})$. Denote by $G$ a quasi-inverse of the standard equivalence $X\otimes^\mathbf{L}_A-\colon \mathbf{D}^b(A\mbox{-{\rm mod}})\rightarrow \mathbf{D}^b(B\mbox{-{\rm mod}})$. Then the triangle functor $GF\colon \mathbf{D}^b(A\mbox{-{\rm mod}})\rightarrow \mathbf{D}^b(A\mbox{-{\rm mod}})$ satisfies $GF(A)\simeq A$. Proposition \ref{prop:part} implies that $GF$ is standard, and thus $F$ is isomorphic to the composition of $X\otimes^\mathbf{L}_A-$ and a standard equivalence. Then we are done in this case by the well-known fact that the composition of two standard equivalences is standard.

In general, let $A$ be derived-triangular. Assume that $A'$ is a triangular algebra which is derived equivalent to $A$. By \cite[Proposition 6.5.5]{Zim}, there is a standard equivalence $F'\colon \mathbf{D}^b(A'\mbox{-{\rm mod}})\rightarrow \mathbf{D}^b(A\mbox{-{\rm mod}})$. The above argument implies that the composition $FF'$ is standard. Recall from \cite[Proposition 6.5.6]{Zim} that a quasi-inverse $F'^{-1}$ of $F'$ is standard. We are done by observing that $F$ is isomorphic to the composition $(FF')F'^{-1}$, a composition of two standard equivalences.

\vskip 10pt

\noindent {\bf Acknowledgements}\quad  We thank Martin Kalck for pointing out the example in \cite{BGS}. The author is supported by National Natural Science Foundation of China (No. 11201446), NCET-12-0507, and the Fundamental Research Funds for the Central Universities.

\bibliography{}

\begin{thebibliography}{999}

\bibitem{AR} {\sc P.N. Achar, and S. Riche}, {\em Koszul duality and semisimplicity of Frobenius}, Ann. Inst. Fourier {\bf 63}(4) (2013), 1511--1612.

\bibitem{BGS} {\sc G. Bonbinski, C. Geiss, and A. Skowronski}, {\em Classification of derived discrete algebras}, Cent. Euro. J. Math. {\bf 2}(1) (2004), 19--49.


\bibitem{Kel} {\sc B. Keller}, {\em On triangulated orbit categories,} Doc. Math. {\bf 10} (2005), 551--581.


\bibitem{Rin} {\sc C.M. Ringel},  Tame Algebras and Integral Quadratic Forms, Lect. Notes Math. {\bf 1099}, Springer-Verlag, Berlin Heidelberg New York Tokyo, 1984.

\bibitem{Zim} {\sc A. Zimmermann}, Representation Theory, A Homological Algebra Point of View, Springer,  International Publishing Switzerland, 2014.


\end{thebibliography}

\vskip 10pt

 {\footnotesize \noindent Xiao-Wu Chen \\
 Key Laboratory of Wu Wen-Tsun Mathematics, Chinese Academy of Sciences\\
School of Mathematical Sciences, University of Science and Technology of China\\
No. 96 Jinzhai Road, Hefei, 230026, Anhui, P.R. China.\\
URL: http://home.ustc.edu.cn/$^\sim$xwchen.}

\end{document}